\font\ehsc=cmcsc10 scaled 850
\let\sse=\subseteq
\let\noi=\noindent
\let\limply=\Longrightarrow
\def\0{\{0\}}
\def\span{{\kern.5pt{\rm span}\kern1pt}}
\def\smallfrac#1#2{{\textstyle{\frac{#1}{#2}}}}
\def\conv{{\;\longrightarrow\;}}
\def\wconv{{{\buildrel_{\scriptstyle w}\over\conv}}}
\def\sconv{{{\buildrel_{\scriptstyle s}\over\conv}}}
\def\uconv{{{\buildrel_{\scriptstyle u}\over\conv}}}
	\font\fiverm=cmr5
\def\sslash{\hbox{{\fiverm/}}}
\def\notconv{{{\conv\kern-13pt\slash}\kern9pt}}
\def\notuconv{{{\uconv\kern-13pt\sslash}\kern9pt}}
\def\notsconv{{{\sconv\kern-13pt\sslash}\kern9pt}}
\def\notwconv{{{\wconv\kern-13pt\sslash}\kern9pt}}
\def\newmatrix#1{\null\,\vcenter{
                   \baselineskip=8pt\mathsurround=-0pt\ialign{
                   \hfil ${##}$
                   \hfil &&
                   \hfil ${##}$
                   \hfil \crcr
                   \mathstrut \crcr
                   \noalign{\kern-\baselineskip}#1 \crcr
                   \mathstrut \crcr
                   \noalign{\kern-\baselineskip} \crcr }}\!}
\def\B{{\mathcal B}}
\def\C{{\kern.5pt\mathcal C}}
\def\H{{\mathcal H}}
\def\O{{\mathcal O}}
\def\U{{\mathcal U}}
\def\X{{\mathcal X}}
\def\BH{{\B[\H]}}
\def\CC{{\mathbb C\kern.5pt}}
\def\NN{{\mathbb N\kern.5pt}}
\newtheorem{theorem}{Theorem}
\newtheorem{lemma}{Lemma}
\theoremstyle{definition}
\newtheorem{remark}{Remark}
\numberwithin{theorem}{section}
\numberwithin{lemma}{section}
\numberwithin{corollary}{section}
\numberwithin{proposition}{section}
\numberwithin{conjecture}{section}
\numberwithin{definition}{section}
\numberwithin{remark}{section}
\numberwithin{question}{section}
\begin{document}

\vglue-45pt\noindent
\hfill{\it Journal of Function Spaces}\/,
{\bf 2018} (article id 4732836 - 2018) 1--5

\vglue20pt
\title{Boundedly Spaced Subsequences and Weak Dynamics}
\author{C.S. Kubrusly}
\address{Applied Mathematics Department, Federal University,
         Rio de Janeiro, RJ, Brazil}
\email{carloskubrusly@gmail.com}
\author{P.C.M. Vieira}
\address{National Laboratory for Scientific Computation,
         Petr\'opolis, RJ, Brazil}
\email{paulocm@lncc.br}
\subjclass{Primary 47A16; Secondary 47A45}
\renewcommand{\keywordsname}{Keywords}
\keywords{Supercyclic operators, weak supercyclic operators, weak stability}
\date{March 7, 2018}

\begin{abstract}
Weak supercyclicity is related to weak stability, which leads to the question
that asks whether every weakly supercyclic power bounded operator is weakly
stable$.$ This is approached here by investigating weak l-sequential
supercyclicity for Hilbert-space contractions through Nagy--Folia\c s--Langer
decomposition, thus reducing the problem to the quest of conditions for a
weakly l-sequentially supercyclic unitary operator to be weakly stable, and
this is done in light of boundedly spaced subsequences.
\end{abstract}

\maketitle

\vskip0pt\noi
\section{Introduction}

The purpose of this paper is to characterize weak supercyclicity for
Hilbert-space contractions, which is shown to be equivalent to characterizing
weak supercyclicity for unitary operators$.$ This is naturally motivated by
an open question that asks whether every weakly supercyclic power bounded
operator is weakly stable (which in turn is naturally motivated by a result
that asserts that every supercyclic power bounded operator is strongly
stable)$.$ Precisely, weakly supercyclicity is investigated in light of
boundedly spaced subsequences as discussed in Lemma 3.1$.$ The main result in
Theorem 4.1 characterizes weakly l-sequentially supercyclic unitary
opera\-tors $U\!$ that are weakly unstable in terms of boundedly spaced
subsequences of the power sequence $\{U^n\}.$ Remark 4.1 shows that
characterizing any form of weak super\-cyclicity for weakly unstable unitary
operators is equivalent to characterizing any form of weak supercyclicity for
weakly unstable contractions after the Nagy--Foia\c s--Langer decomposition.

\vskip0pt\noi
\section{Notation and Terminology}

Throughout this paper $\H$ denotes a complex (infinite-dimensional)
Hilbert space, and $\BH$ denotes the Banach algebra of all operators on
$\H$ (i.e., of all bounded linear transformations of $\H$ into itself)$.$
An operator ${T\in\BH}$ is power bounded if ${\sup_n\|T^n\|<\infty}$ (same
notation for norm in $\H$ and for the induced uniform norm in $\BH).$ An
operator ${U\!\in\BH}$ is unitary if $UU^*\!=U^*U\!=I$, where $I$ stands for
the identity in $\BH$ and ${T^*\kern-1pt\in\BH}$ denotes the adjoint of an
operator ${T\in\BH}.$ An operator ${T\in\BH}$ is strongly stable or weakly
stable if the $\H$-valued power sequence $\{T^nx\}_{n\ge0}\kern-1pt$ converges
strongly (i.e., in the norm topology) or weakly to zero for every ${x\in\H}.$
In other words, if
$$
T^nx\conv0
\quad\;\hbox{or}\;\quad
T^nx\wconv0
$$
for every ${x\in\H}$, which means ${\|T^nx\|\to0}$ for every ${x\in\H}$
or ${\<T^nx\,;y\>\to0}$ for every ${x,y\in\H}$, respectively (clearly, strong
stability implies weak stability)$.$ Let
$$
\O_T(y)={\bigcup}_{n\ge0}T^ny=\big\{T^ny\in\H\!:\,n\ge0\big\}
$$
denote the orbit of a vector ${y\in\H}$ under an operator ${T\in\BH}$ --- we
write ${\bigcup}_{n\ge0}T^ny$ for
${\bigcup}_{n\ge0}T^n(\{y\})={\bigcup}_{n\ge0}\{T^ny\}.$ The orbit $\O_T(A)$
of a set ${A\sse\H}$ under $T$ is likewise defined:
$\O_T(A)=\bigcup_{n\ge0}T^n(A).$ In particular, the orbit of the
one-dimensional space spanned by $y$,
$$
\O_T(\span\{y\})={\bigcup}_{n\ge0}T^n(\span\{y\})
=\big\{\alpha T^ny\in\H\!:\,\alpha\in\CC,\,n\ge0\big\},
$$
is referred to as the projective orbit of a vector ${y\in\H}$ under an
operator ${T\in\BH}.$ A non\-zero vector ${y\in\H}$ is a {\it supercyclic
vector}\/ for an operator ${T\in\BH}$ if the projective orbit of $y$ is
dense in $\H$ in the norm topology; that is, if
$$
\O_T(\span\{y\})^-\!=\H,
$$
where the upper bar $^-$ stands for closure in the norm topology$.$ Thus a
non\-zero ${y\in\H}$ is a supercyclic vector for $T$ if and only if for
every ${x\in\H}$ there exists a $\CC$-valued sequence $\{\alpha_i\}_{i\ge0}$
(which depends on $x$ and $y$ and consists of non\-zero numbers)
such that
$$
\alpha_iT^{n_i}y\conv x
$$
for some subsequence $\{T^{n_i}\}_{i\ge0}$ of $\{T^n\}_{n\ge0}.$ If
${T\in\BH}$ has a supercyclic vector, then it is a {\it supercyclic
operator}$.$ The weak counterpart of the above convergence criterion reads as
follows$.$ A non\-zero vector ${y\in\H}$ is a {\it weakly l-sequentially
supercyclic vector}\/ for an operator ${T\in\BH}$ if for every ${x\in\H}$
there exists a $\CC$-valued sequence $\{\alpha_i\}_{i\ge0}$ (which depends on
$x$ and $y$ and consists of non\-zero numbers) such that, for some
subsequence $\{T^{n_i}\}_{i\ge0}$ of $\{T^n\}_{n\ge0}$,
$$
\alpha_iT^{n_i}y\wconv x.
$$
An operator $T$ in $\BH$ is {\it weakly l-sequentially supercyclic}\/ if it
has a weakly l-se\-quentially supercyclic vector$.$ An equivalent definition
reads as follows$.$ The weak limit set of $\O_T(\span\{y\})$ is the set
consisting of all weak limits of weakly convergent $\O_T(\span\{y\})$-valued
sequences, and a operator ${T\in\BH}$ is weakly l-sequentially supercyclic if
there exists a vector ${y\in\H}$ (called a weakly l-sequentially supercyclic
vector for $T$) for which the weak limit set of $\O_T(\span\{y\})$ is equal
to $\H$.

\vskip4pt
Several forms of weak supercyclicity, including weak l-sequential
supercyclicity, have recently been investigated in
\cite{San1, San2, BM1, MS, Shk, Kub, KD1, KD2}$.$ An operator ${T\in\BH}$
is {\it weakly supercyclic}\/ if there exists a vector ${y\in\H}$ (called a
{\it weakly supercyclic vector}\/ for $T$) such that the projective orbit
$\O_T(\span\{y\})$ is weakly dense in $\H$ (i.e., dense in the weak topology
of $\H$)$.$ A set ${A\sse\H}$ is weakly sequentially closed if every
$A$-valued weakly convergent sequence has its limit in it; and the weak
sequential closure of $A$ is the smallest weakly sequentially closed set
(i.e., the intersection of all weakly sequentially closed sets) including
$A.$ An operator ${T\in\BH}$ is {\it weakly sequentially supercyclic}\/ if
there exists a vector ${y\in\H}$ (called a {\it weakly sequentially
supercyclic vector}\/ for $T$) for which the weak sequential closure of
$\O_T(\span\{y\})$ is equal to $\H.$ Observe that
\vskip6pt\noi
$$
\newmatrix{\!
_{_{\hbox{\ehsc supercyclicity}}} & _{_{_{\textstyle\limply}}}\! &
_{\hbox{\ehsc weak l-sequential}} & _{_{_{\textstyle\limply}}}\! &
_{\hbox{\ehsc weak sequential}}   & _{_{_{\textstyle\limply}}}\! &
_{\hbox{\ehsc weak}}              & \!\!,                           \cr
                                  &                              &
\hbox{\ehsc supercyclicity}       &                              &
\hbox{\ehsc supercyclicity}       &                              &
\hbox{\ehsc supercyclicity}       &                                \cr}
$$
\vskip4pt\noi
and the reverse implications fail (see, e.g., 
\cite[pp.38,39]{Shk}, \cite[pp.259,260]{BM2}).

\vskip4pt
The notion of weak l-sequential supercyclicity was introduced explictly in
\cite{BCS} and implicitly in \cite{BM1}, and investigated in \cite{Shk} who
introduced a terminology similar to the one adopted here (we use the letter
``l'' for ``limit'' in stead of the numeral ``1'' used in \cite{Shk})$.$
Supercyclicity for an operator $T$ implies weak supercyclicity, which in turn
implies the operator $T$ acts on a separable space (see, e.g.,
\cite[Section 3]{KD1}), and so separability for $\H$ is a consequence of any
form of supercyclicity for $T$, including weak l-sequential supercyclicity.

\vskip0pt\noi
\section{An Auxiliary Result}

Let $\!\{n\}_{n\ge0}\!$ denote the self-indexing of the set of all
non\-negative integers $\NN_0$ e\-quipped with the natural order$.$ A
subsequence (in fact, a subset) $\{n_k\}_{k\ge0}$ of $\{n\}_{n\ge0}$ is of
{\it bounded increments}\/ (or has {\it bounded gaps}\/) if
${\sup_{k\ge0}(n_{k+1}-n_k)<\infty}.$ (Integer sequences of bounded increments
have been used in \cite{KV} towards weak stability$.$) We say a subsequence
$\{A_{n_k}\}$ of any sequence $\{A_n\}$ is {\it boundedly spaced}\/ if it is
indexed by a subsequence of bounded increments (i.e., $\{A_{n_k}\}$ is
boundedly spaced if $\sup_k{(n_{k+1}-n_k})<\infty)$.

\vskip4pt
A vector ${y\in\H}$ is a {\it collapsing vector}\/ for an operator ${T\in\BH}$
if the orbit of $y$ under $T$ meets the origin (i.e, if ${T^ny=0}$ for some
${n\ge1}).$ Otherwise we say $y$ is a {\it non\-collapsing vector}\/ for $T$
(i.e., if ${T^ny\ne0}$ for every ${n\ge0}).$ If there exists a collapsing
vector for an operator, then we say the operator {\it has a collapsing
orbit}\/$.$ If there exists a non\-collapsing vector for an operator, then we
say the operator has a {\it non\-collapsing orbit}\/$.$ It is clear that a
collapsing orbit is a finite set, and so a weakly supercyclic operator has a
non\-collapsing orbit$.$ Moreover, $T$ has a non\-collapsing orbit if and only
if its adjoint $T^*\!$ has$.$ (Indeed, every vector is collapsing for $T$ if
and only if every vector is collapsing for $T^*$; that is, for every
${y\in\H}$ there exists an ${n\ge1}$ such that ${T^ny=0}$ if and only if
${\<T^ny\,;z\>=0}$ for every ${y,z\in\H}$ for some ${n\ge1}$, which means
${\<y\,;T^{*n}z\>=0}$ for every ${y,z\in\H}$ for some ${n\ge1}$, which is
equivalent to saying $T^{*n}z=0$ for every ${z\in\H}$ for some ${n\ge1}$.)

\begin{lemma}
Take an arbitrary operator\/ ${T\in\BH}$ and an arbitrary non\-zero vector\/
${x\in\H}.$ Let\/ $\{T^{n_k}\}$ be any subsequence of the power sequence\/
$\{T^n\}$ and let\/ $P_i$ and\/ $P^{\,\prime}_i$ for\/ ${i=1,2}$ stand for
the following properties.
\vskip4pt
\begin{description}
\item{$\kern-7ptP_1\!:\;$}
${T^nx\wconv0}$,
\vskip0pt
\item{$\kern-7ptP^{\,\prime}_1\!:\,$}
${T^{n_k}x\wconv0}$,
\vskip0pt
\item{$\kern-7ptP_2\!:\;$}
${\liminf_n|\<T^nx\,;z\>|>0}$ for some\/ ${z\in\H}$,
\vskip0pt
\item{$\kern-7ptP^{\,\prime}_2\!:\,$}
${\liminf_k|\<T^{n_k}x\,;z\>|>0}$ for some\/ ${z\in\H}$.
\end{description}
\vskip2pt
Moreover, if\/ $T$ has a non\-collapsing orbit, then consider the following
additional properties which hold whenever\/ $T$ is a power bounded operator.
\begin{description}
\vskip6pt
\item{$\kern-7ptP_3\!:\;$}
${\limsup_n|\<T^nx\,;z\>|<\gamma\,\|x\|\,\|z\|}$ for some\/ ${\gamma>0}$
for every non\-collapsing vector\/ $z$ for\/ $T^*\!$,
\vskip0pt
\item{$\kern-7ptP^{\,\prime}_3\!:\,$}
${\limsup_k|\<T^{n_k}x\,;z\>|<\gamma'\|x\|\,\|z\|}$ for some\/ ${\gamma'>0}$
for every non\-collapsing vector\/ $z$ for\/ $T^*\!$,
\end{description}
$($where\/ $\gamma$ and\/ $\gamma'$ may depend on\/ $x)$ and\/
$\gamma'=\gamma$ if\/ $T$ is a contraction.
\vskip4pt\noi
{\rm Claim:} For each\/ ${i=1,2,3}$ the assertions bellow are pairwise
equivalent.
\vskip4pt
\begin{description}
\item{$\kern-6pt$\rm(a)$\kern1pt$}
$P_i$ holds.
\vskip4pt
\item{$\kern-6pt$\rm(b)$\kern1pt$}
$P^{\,\prime}_i$ holds for some boundedly spaced subsequence\/
$\{T^{n_k}\}$ of\/ $\{T^n\}$.
\vskip4pt
\item{$\kern-6pt$\rm(c)$\kern2pt$}
$P^{\,\prime}_i$ holds for every boundedly spaced subsequence\/
$\{T^{n_k}\}$ of\/ $\{T^n\}$.
\end{description}
\end{lemma}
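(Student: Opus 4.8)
The plan is to establish, for each $i$ (for $i=3$ under the standing hypothesis that $T$ has a non-collapsing orbit), the implications $(a)\Rightarrow(c)\Rightarrow(b)\Rightarrow(a)$; only the last of these uses bounded spacing, and in fact $(c)\Rightarrow(a)$ and $(c)\Rightarrow(b)$ are trivialities.

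Indeed $(c)\Rightarrow(a)$ needs nothing: the whole power sequence $\{T^n\}_{n\ge0}$ is boundedly spaced (all gaps equal $1$), so $P^{\,\prime}_i$ for it is literally $P_i$; and $(c)\Rightarrow(b)$ is immediate. For $(a)\Rightarrow(c)$ one uses that the relevant extremal quantities behave monotonically under passage to a subsequence: a subsequence of a weakly null sequence is weakly null ($i=1$); $\liminf_k|\<T^{n_k}x;z\>|\ge\liminf_n|\<T^nx;z\>|>0$, so the \emph{same} $z$ witnesses $P^{\,\prime}_2$ ($i=2$); and $\limsup_k|\<T^{n_k}x;z\>|\le\limsup_n|\<T^nx;z\>|<\gamma\|x\|\,\|z\|$, so the same $\gamma$ may be taken as $\gamma'$ ($i=3$). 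Hence $(a)$, $(b)$, $(c)$ are equivalent once $(b)\Rightarrow(a)$ is proved.

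For $(b)\Rightarrow(a)$, fix a witnessing boundedly spaced subsequence $\{T^{n_k}\}$ and set $N=\sup_k(n_{k+1}-n_k)<\infty$. The organising remark is that every $n$ past $n_0$ can be written $n=n_k+j$ with $0\le j\le N-1$ (take $k$ maximal with $n_k\le n$, so $n<n_{k+1}\le n_k+N$); thus $\<T^nx;y\>=\<T^{n_k}x;(T^*)^jy\>$, the set $\{T^nx:n\ge n_0\}$ equals $\bigcup_{j=0}^{N-1}\{T^jT^{n_k}x:k\ge0\}$, and therefore $\liminf_n|\<T^nx;y\>|=\min_{0\le j<N}\liminf_k|\<T^{n_k}x;(T^*)^jy\>|$ while $\limsup_n|\<T^nx;y\>|=\max_{0\le j<N}\limsup_k|\<T^{n_k}x;(T^*)^jy\>|$. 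For $i=1$: given $y$ and $\varepsilon>0$, apply $P^{\,\prime}_1$ to the finitely many fixed vectors $(T^*)^jy$ to obtain one index $K$ past which all $|\<T^{n_k}x;(T^*)^jy\>|<\varepsilon$; then $|\<T^nx;y\>|<\varepsilon$ for $n\ge n_K$, so $T^nx\wconv0$. For $i=3$: if $y$ is non-collapsing for $T^*$ so is each (nonzero) vector $(T^*)^jy$, so $P^{\,\prime}_3$ gives $\limsup_k|\<T^{n_k}x;(T^*)^jy\>|<\gamma'\|x\|\,\|(T^*)^jy\|\le\gamma'\|T^j\|\,\|x\|\,\|y\|$; taking the maximum over $j$ yields $P_3$ with $\gamma=\gamma'\max_{0\le j<N}\|T^j\|$, which reduces to $\gamma=\gamma'$ when $T$ is a contraction.

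The real obstacle is $(b)\Rightarrow(a)$ for $i=2$: the fill-in just used does not suffice, because a witness $z$ for $P^{\,\prime}_2$ along $\{n_k\}$ governs only the slot $j=0$, and $\liminf$, unlike $\limsup$, is not controlled by $\|T^j\|$. I would run the contrapositive: assume $\neg P_2$, i.e. $\liminf_n|\<T^nx;w\>|=0$ for \emph{every} $w$, and deduce that $P^{\,\prime}_2$ fails along every boundedly spaced subsequence. Some preliminary normalisation is available: $|\<T^{n_k}x;z\>|\ge c>0$ forces $\|T^{n_k}x\|\ge c/\|z\|$, and then $\|T^{n_{k+1}}x\|\le\|T\|^{\,n_{k+1}-n}\|T^nx\|$ for $n_k\le n\le n_{k+1}$ forces $\inf_n\|T^nx\|>0$; working inside the separable invariant subspace $(\span\,\O_T(x))^-$, weak sequential compactness then yields (in the norm-bounded case) a nonzero weak cluster point $w$ of $\{T^{n_k}x\}$ with $|\<w;z\>|\ge c$. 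The genuinely delicate step — and the one that uses complexness of $\H$, an irrational rotation of $\mathbb R^2$ being a counterexample in the real setting — is to turn such recurrence along a bounded-gap subsequence into a \emph{single} vector $w$ with $\liminf_n|\<T^nx;w\>|>0$; equivalently, to show that under $\neg P_2$ no set $\{n:|\<T^nx;y\>|\ge\varepsilon\}$ can have bounded gaps. I expect this to require a selection/density argument meshed with weak compactness, and it is the core of the lemma.
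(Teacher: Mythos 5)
Your handling of $i=1$, $i=3$, and of the implications (a)$\Rightarrow$(c)$\Rightarrow$(b) is correct and is essentially the paper's own argument: the paper's Part~1 is precisely your covering ${\bigcup_{0\le j\le M}\{n_k+j\}_{k\ge0}}=\NN_0$, and its Part~2 runs the same identity $\<T^{n_k+j}x\,;y\>=\<T^{n_k}x\,;T^{*j}y\>$ over the finitely many slots $j$, with the same $\gamma'\ge\gamma\sup_n\|T^n\|$ bookkeeping for $i=3$. The one place your proposal is incomplete is exactly the place you flag: (b)$\Rightarrow$(a) for $i=2$, that is, passing from $\liminf_k|\<T^{n_k}x\,;z\>|>0$ along one boundedly spaced subsequence to $\liminf_n|\<T^nx\,;w\>|>0$ for a single $w$. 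You correctly observe that the sliding-window device only controls $\max_{0\le j\le M}$ of the window, not any one slot, and your real-scalar rotation example correctly shows the implication is not formal; but you then only sketch a compactness strategy without carrying it out. As submitted, case $i=2$ is a genuine gap in your proof.

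You should know, however, that the paper does not close this gap either. Its step (b$_2$) assumes $\neg P^{\,\prime}_2$ (i.e.\ $\liminf_k|\<T^{n_k}x\,;z\>|=0$ for \emph{every} $z$), substitutes $z=T^{*j}y$, and concludes $\neg P_2$ --- but that is the contrapositive of the easy direction $P_2\Rightarrow P^{\,\prime}_2$, which is immediate anyway since a $\liminf$ can only increase along a subsequence. The direction you are worried about is then dispatched with the single sentence ``the converse is trivial,'' and it is not: the negated form of (b$_1$) yields only ``$\liminf_k|\<T^{n_k+j}x\,;y\>|=0$ for \emph{some} $j$,'' which cannot be upgraded to the slot $j=0$ for an arbitrary $z$. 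So your diagnosis of where the difficulty sits is accurate, and the argument you hoped to find in the paper is absent. For the only use made of the lemma (Theorem~4.1, where $T=U$ is unitary) the implication can be rescued: if $\liminf_n|\<U^nx\,;w\>|=0$ for every $w$, then $x$ is orthogonal to every eigenvector of $U$, so every measure $\mu_{x,w}=\<E(\cdot)x\,;w\>$ is continuous, and Wiener's theorem forces each set $\{n:|\<U^nx\,;w\>|\ge\varepsilon\}$ to have density zero, hence to contain no boundedly spaced (syndetic) set, which has lower density at least $1/(M+1)$. For a general $T\in\BH$ I see no justification in the paper, so do not regard case $i=2$ as settled by either your argument or the paper's.
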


\begin{proof}
We split the proof into 2 parts.

\vskip6pt\noi
{\bf Part 1.}
{\it Let\/ $\{\alpha_{n_k}\}$ be a boundedly spaced}\/ (infinite)
{\it subsequence of an}\/ (infinite) {\it se\-quence of complex numbers\/
$\{\alpha_n\}$, let\/ $\alpha$ be an arbitrary complex number, and let
$\beta$ and $\gamma$ be arbitrary non\-negative and positive real numbers,
respectively}\/.
\vskip4pt
\begin{description}
\item{$\kern-12pt({\hbox{a$_1$}})\kern0pt$}
{\it ${\alpha_{n_k+j}\to\alpha}$ as\/ ${k\to\infty}$ for every\/ ${j\ge0}$
if and only if\/ ${\alpha_n\to\alpha}$ as}\/ ${n\to\infty}$.
\vskip4pt
\item{$\kern-12pt({\hbox{b$_1$}})\kern0pt$}
{\it ${\liminf_k|\alpha_{n_k+j}\!-\alpha|>\beta}$ for every\/ ${j\ge0}$ if
and only if}\/ ${\liminf_n|\alpha_n-\alpha|>\beta}$.
\vskip4pt
\item{$\kern-12pt({\hbox{c$_1$}})\kern1pt$}
{\it ${\limsup_k|\alpha_{n_k+j}\!-\alpha|<\gamma}$ for every\/ ${j\ge0}$
if and only if}\/ ${\limsup_n|\alpha_n-\alpha|<\gamma}$.
\end{description}

\vskip6pt\noi
{\it Proof of Part}\/ 1.
This is an easy consequence of the elementary fact that if
$\{n_k\}_{k\ge0}$ is of bounded increments with $M=\sup_k({n_{k+1}-n_k})$,
then ${\bigcup_{j\in[0,M]}\{n_k+j\}_{k\ge0}}=\{n\}_{n\ge0}=\NN_0.\!\!\!\qed$

\vskip6pt\noi
{\bf Part 2.}
{\it Take any\/ ${x\in\H}$ and let\/ $\{T^{n_k}\}$ be a boundedly spaced
subsequence of}\/ $\{T^n\}$.
\vskip4pt
\begin{description}
\item{$\kern-11pt({\hbox{a$_2$}})\kern2pt$}
{\it ${\<T^{n_k}x\,;z\>\to0}$ for every\/ ${z\in\H}$ if and only if\/
${\<T^nx\,;z\>\to0}$ for every\/ ${z\in\H}$
\vskip2pt\noi
{\rm(i.e.,} ${T^{n_k}x\wconv0}$ if and only if}\/ ${T^nx\wconv0})$.
\vskip4pt
\item{$\kern-11pt({\hbox{b$_2$}})\kern2pt$}
{\it ${\liminf_k|\<T^{n_k}x\,;z\>|>0}$ for some\/ ${z\in\H}$
if and only if\/ ${\liminf_n|\<T^nx\,;z\>|>0}$ for some}\/ ${z\in\H}$.
\end{description}
\vskip2pt\noi
{\it If\/ $T$ is power bounded and has a non\-collapsing orbit, then}
\vskip2pt\noi
\begin{description}
\item{$\kern-11pt({\hbox{c$_2$}})\kern2pt$}
{\it ${\limsup_k|\<T^{n_k}x\,;z\>|}\kern-1pt<\kern-1pt{\gamma\|x\|\|z\|}$
for every non\-collapsing vector $z$ for\/ $T^*\!$ if and ${\kern-1pt}$only
if\/ ${\limsup_n|\<T^nx\,;z\>|}\kern-1pt<\kern-1pt{\gamma'\|x\|\|z\|}$
for every non\-collapsing vector $z$ for}\/ $T^*\!$,
\end{description}
{\it for some\/ ${\gamma,\gamma'>0}$ where\/ $\gamma'=\gamma$ if\/ $T$ is
a contraction}\/.

\vskip6pt\noi
{\it Proof of Part}\/ 2.
Let $\{T^{n_k}\}$ be a boundedly spaced subsequence of $\{T^n\}$ so that
$\{\<T^{n_k}x\,;z\>\}$ is a boundedly spaced subsequence of
$\{\<T^nx\,;z\>\}$ for ${x,z\!\in\!\H}.$ Recall$:$ $T^{m+n}={T^mT^n}$
for every ${m,n\ge0}.$ Take ${x,y\in\H}$ arbitrary so that for each
${j\ge0}$
$$
\<T^{n_k+j}x\,;y\>=\<T^{n_k}x\,;T^{*j}y\>.
$$
Let $x$ be an arbitrary vector in $\H$.

\vskip4pt\noi
(a$_2$) Suppose ${\<T^{n_k}x\,;z\>\to0}$ for every ${z\in\H}.$ In particular,
${\<T^{n_k}x\,;z\>\to0}$ for every ${z\in\O_{T^*}(y)}$ for every ${y\in\H}.$
This means ${\<T^{n_k}x\,;T^{*j}y\>\to0}$ for every ${j\ge0}$ and every
${y\in\H}$; that is, ${\<T^{n_k+j}x\,;y\>\to0}$ for every ${j\ge0}$ and every
${y\in\H}.$ This is equivalent to ${\<T^nx\,;y\>\to0}$ for every
${y\in\H}$ by (a$_1$) in Part 1$.$ The con\-verse is trivial.

\vskip6pt\noi
(b$_2$)
We prove (b$_2$) contrapositively$.$ Suppose ${\liminf_k|\<T^{n_k}x\,;z\>|=0}$
for every $z$ in $\H.$ Then ${\liminf_k|\<T^{n_k}x\,;z\>|=0}$ for every $z$ in
$\O_{T^*}(y)$ and every $y$ in $\H$, which means
${\liminf_k|\<T^{n_k+j}x\,;y\>|=0}$ for every ${j\ge0}$ and every ${y\in\H}.$
By (b$_1$) in Part 1 this is equivalent to ${\liminf_n|\<T^nx\,;y\>|=0}$ for
every ${y\in\H}.$ The converse is trivial$.$

\vskip6pt\noi
(c$_2$) Let ${T\in\BH}$ be a power bounded operator with a non\-collapsing
orbit (and so its adjoint has a non\-collapsing orbit as well)$.$ Take an
arbitrary positive number $\gamma.$ Suppose
${\limsup_k|\<T^{n_k}x\,;y\>|<\gamma\|x\|\|y\|}$ for every non\-collapsing
vector ${y\in\H}$ for $T^*$ (i.e., for every vector $y$ such that
${T^{*j}y\ne0}$ for every ${j\ge0}$ --- such vectors do exist since $T^*$ has
a non\-collapsing orbit)$.$ Take an arbitrary non\-collapsing vector $y$ for
$T^*$ and an arbitrary ${z\in\O_{T^*}(y)}$ so that ${0\ne z=T^{*j}y}$ for
some ${j\ge0}$ and hence $T^{*i}z=T^{*(j+i)}y\ne0$ for every ${i\ge0}$,
which means $z$ is a non\-collapsing vector for $T^*$ as well$.$ Thus
${\limsup_k|\<T^{n_k}x\,;z\>|}<{\gamma\|x\|\|z\|}$ for every
${z\in\O_{T^*}(y)}$ and every non\-collapsing vector $y$ for $T^*\!.$ This
implies
${\limsup_k|\<T^{n_k+j}x\,;y\>|}={\limsup_k|\<T^{n_k}x\,;T^{*j}y\>|}
<{\gamma\|x\|\|T^{*j}y\|}\le{\gamma'\|x\|\|y\|}$
for every ${j\ge0}$ whenever $y$ is an arbitrary non\-collapsing vector for
$T^*$ (since ${\|T^{*j}y\|\ne0}$ every ${j\ge0}$) with
${\gamma'\ge{\gamma\,\sup_n\|T^n\|}}$
(since $T$ is power bounded --- if $T$ is a contraction so that
${\sup_n\|T^n\|\le1}$ then we may take ${\gamma'=\gamma}).$ Therefore
${\limsup_n|\<T^nx\,;y\>|}<{\gamma'\|x\|\|y\|}$ for every non\-collapsing
vector $y$ for $T^*$ by (c$_1$) in Part 1$.$ Again the converse is
trivial$.\!\!\!\qed$

\vskip4pt\noi
Thus the claimed equivalences in (a), (b) and (c) follow from Part 2 since
$\{T^{n_k}\}$ was taken to be an arbitrary boundedly spaced subsequence of
$\{T^n\}$.
\end{proof}

\vskip4pt
Lemma 3.1 is naturally extended to normed spaces by replacing inner product
with dual pairs, and Hilbert-space adjoints with normed-space adjoints.

\vskip0pt\noi
\section{Weak Supercyclicity and Weak stability}

It was shown in \cite[Theorem 2.2]{AB} that {\it if a power bounded operator
on a \hbox{Banach} space is supercyclic, then it is strongly stable}\/$.$
Such a result naturally prompts the question$:$ {\it does weak supercyclicity
imply weak stability for power bounded operators}\/$?$ The question was
posed and investigated in \cite{KD1}, and remains unanswered even if
\hbox{Banach}-space power bounded operators are restricted to
\hbox{Hilbert}-space contractions, where the problem is equivalently stated
for unitary operators (see Remark 4.1 below), and also if weak supercyclicity
is strengthened to weak l-sequential supercyclicity$:$ {\it does there exist
a weakly unstable and weakly l-sequentially supercyclic unitary operator}\/?

\vskip6pt
No unitary operator is supercyclic (reason: no Banach-space isometry is
super\-cyclic \cite[Proof of Theorem 2.1]{AB}) and, in addition to this, no
Hilbert-space hyponormal operator is supercyclic \cite[Theorem 3.1]{Bou}$.$
But the existence of weakly supercyclic (weakly l-sequentially
supercyclic, actually) unitary operators was shown in
\cite[Example 3.6, pp.10,12]{BM1} (also see \cite[Question 1]{Shk}), and the
existence of weakly supercyclic unitary operators that are not
l-sequentially supercyclic was shown in \cite[Proposition 1.1 and
Theorem 1.2]{Shk}$.$ Next we consider in Theorem 4.1 the case of weakly
l-sequentially supercyclic unitary operators $U\!$ that are not weakly
stable in light of boundedly spaced subsequences of the power sequence
$\{U^n\}$, whose proof applies \cite[Theorem 6.2]{KD1} and Lemma 3.1.

\begin{theorem}
If a unitary operator\/ $U\!$ on a Hilbert space\/ $\H$ is weakly
l-sequentially supercyclic but not weakly stable, then there exists a weakly
l-sequentially supercyclic vector\/ ${y\in\H}$ for\/ $U\!$ such that\/
${U^ny\notwconv0}.$ Moreover, for every weakly l-sequentially supercyclic
vector\/ ${y\in\H}$ for\/ $U\!$ either
\vskip4pt
\begin{description}
\item{$\kern-6pt$\rm(a)$\kern2pt$}
$\liminf_n|\<U^{n_k}y\,;z\>|=0$
for every\/ $($equivalently, for some\/$)$ boundedly spaced subsequence
$\{U^{n_k}\}$ of $\{U^n\}$, $\;\;$ or
\vskip4pt
\item{$\kern-6pt$\rm(b)$\kern2pt$}
$\limsup_k|\<U^{n_k}y\,;z\>|<\|z\|\kern1pt\|y\|$ for some subsequence\/
$\{U^{n_k}\}$ of\/ $\{U^n\}$.
\end{description}
\vskip4pt\noi
Furthermore, if\/ {\rm(a)} fails and if the subsequence in\/ {\rm(b)} is
boundedly spaced, then
$$
0<\liminf_{\phantom|_{\scriptstyle n}}|\<U^ny\,;z\>|
<\limsup_n|\<U^ny\,;z\>|<\|z\|\kern1pt\|y\|.
$$
\end{theorem}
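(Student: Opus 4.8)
The plan is to reduce everything to the machinery already assembled: the dichotomy in \cite[Theorem 6.2]{KD1} governing weak l-sequential supercyclicity versus weak stability, and the boundedly-spaced transfer principle of Lemma 3.1. First I would observe that a unitary operator is a contraction with $\|U^n\|=1$ for all $n$, and — since it is weakly l-sequentially supercyclic — it is noncollapsing (in fact $U^ny\ne0$ for every nonzero $y$ and every $n$, because $U$ is invertible), so all three clusters of properties $P_1,P_2,P_3$ in Lemma 3.1 are available with $\gamma'=\gamma$ and, because of unitarity, with the natural normalization $\|U^ny\|=\|y\|$ forcing the relevant constant in $P_3$ to be taken as $\gamma=1$, i.e. the bound $\limsup_n|\<U^ny;z\>|<\|z\|\,\|y\|$.

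For the first assertion: if $U$ is weakly l-sequentially supercyclic but not weakly stable, then there is some $x\in\H$ with $U^nx\notwconv0$; the substance is to promote this to a \emph{weakly l-sequentially supercyclic} vector $y$ with $U^ny\notwconv0$. Here I would invoke \cite[Theorem 6.2]{KD1}, which (this is how I read the citation) says precisely that for a unitary — or more generally a power bounded — operator, weak l-sequential supercyclicity together with failure of weak stability yields a supercyclic vector that is itself not weakly null; alternatively one argues that the set of supercyclic vectors is dense and the set of $y$ with $U^ny\wconv0$ is a proper subspace-like set that cannot exhaust a dense $G_\delta$, so a supercyclic $y$ with $U^ny\notwconv0$ exists. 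That $U^ny\notwconv0$ means, by definition, $\neg P_1$ for $y$, hence by the equivalence (a)$\Leftrightarrow$(b)$\Leftrightarrow$(c) in Lemma 3.1 applied with $i=1$, $\neg P_1'$ holds along \emph{every} boundedly spaced subsequence as well.

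For the dichotomy (a)/(b): fix any weakly l-sequentially supercyclic vector $y$. Apply \cite[Theorem 6.2]{KD1} to $y$ itself; its conclusion is a trichotomy of the form "$U^ny\wconv0$, or $\liminf_n|\<U^ny;z\>|=0$ for a suitable $z$, or $\limsup_k|\<U^{n_k}y;z\>|<\|z\|\,\|y\|$ along some subsequence" — but the first alternative is incompatible with $y$ being l-sequentially supercyclic while $\H$ is infinite-dimensional (a weakly null orbit-span cannot be weakly dense, by a uniform-boundedness argument), so what survives is exactly (a) or (b). In case (a), the stated "for every (equivalently, for some)" is immediate from Lemma 3.1(b$_2$)/(c$_2$) with $i=2$: $\liminf_n|\<U^ny;z\>|=0$ (the condition "$=0$" is the negation of $P_2$) transfers between the full sequence and every, equivalently some, boundedly spaced subsequence.

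Finally, the "furthermore" clause. Suppose (a) fails and the subsequence $\{U^{n_k}\}$ realizing (b) is boundedly spaced. Failure of (a) means $\liminf_n|\<U^ny;z\>|>0$ for every $z$ — that is, $P_2$ holds for $y$ at every $z$; in particular the left inequality $0<\liminf_n|\<U^ny;z\>|$ holds. For the right inequality, $\limsup_k|\<U^{n_k}y;z\>|<\|z\|\,\|y\|$ along a boundedly spaced subsequence is exactly $P_3'$ for $y$ with $\gamma'=1$, so Lemma 3.1 (the equivalence (b)$\Leftrightarrow$(a) for $i=3$, using that $U$ is a contraction so $\gamma=\gamma'=1$) upgrades it to $P_3$: $\limsup_n|\<U^ny;z\>|<\|z\|\,\|y\|$ for that same $z$ (which is noncollapsing for $U^*$ automatically, as $U^*$ is unitary). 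The strict middle inequality $\liminf_n|\<U^ny;z\>|<\limsup_n|\<U^ny;z\>|$ then follows: if the two coincided, $|\<U^ny;z\>|$ would converge to a value in $(0,\|z\|\,\|y\|)$; combined with l-sequential supercyclicity of $y$ this forces, via the existence of scalars $\alpha_i$ with $\alpha_iU^{n_i}y\wconv z$ and hence $|\alpha_i|\,|\<U^{n_i}y;z\>|\to\|z\|^2$, a contradiction with the $\liminf$/$\limsup$ being pinned strictly below $\|z\|\,\|y\|$ — one plays the two weak limits (of $\alpha_iU^{n_i}y$ towards $z$ and towards a vector of smaller norm) against each other. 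I expect this last step — extracting the \emph{strict} inequality $\liminf<\limsup$ rather than merely $\liminf\le\limsup$ — to be the main obstacle, since it is the one place the argument is not a formal consequence of Lemma 3.1 plus \cite[Theorem 6.2]{KD1} but needs a genuine use of the approximating scalars furnished by l-sequential supercyclicity together with the unitarity-driven norm identity $\|U^{n}y\|=\|y\|$.
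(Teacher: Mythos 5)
Your overall architecture coincides with the paper's: the statement is obtained by specializing \cite[Theorem 6.2]{KD1} to unitary operators (the Nagy--Foia\c s--Langer reduction being already absorbed into the hypothesis) and then transporting each condition between the full power sequence and boundedly spaced subsequences via Lemma 3.1, with ${\gamma=\gamma'=1}$ since $U$ is a contraction and every nonzero vector is noncollapsing for $U^*\!$. Up to and including the two outer inequalities ${0<\liminf_n|\<U^ny\,;z\>|}$ and ${\limsup_n|\<U^ny\,;z\>|<\|z\|\,\|y\|}$ your reduction is sound. (Your dismissal of the alternative ``${U^ny\wconv0}$'' by uniform boundedness is a different, and in fact sharper, device than the paper's observation that (a$'$) is then tautologically satisfied; either serves the purpose here.)

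The genuine gap is in the one step you yourself flag as the obstacle: the strict middle inequality ${\liminf_n|\<U^ny\,;z\>|<\limsup_n|\<U^ny\,;z\>|}$. Your proposed contradiction does not close. If ${|\<U^ny\,;z\>|\to\alpha(y,z)\in(0,\|z\|\,\|y\|)}$ and you approximate ${x=z}$, you get ${|\alpha_i|\to\|z\|^2/\alpha(y,z)}$ and hence ${\|\alpha_iU^{n_i}y\|=|\alpha_i|\,\|y\|\to\|z\|^2\|y\|/\alpha(y,z)>\|z\|}$; there is nothing contradictory in a sequence converging weakly to $z$ while its norms converge to a number strictly larger than $\|z\|$ (the Radon--Riesz property only bites when the norms converge to the norm of the weak limit), so the fact that the $\limsup$ is ``pinned strictly below $\|z\|\,\|y\|$'' yields no contradiction. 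The paper's actual mechanism aims at a vector orthogonal to $z$ rather than at $z$ itself: from ${\alpha_i(y,x)U^{n_i}y\wconv x}$ one gets ${|\alpha_i(y,x)|\,|\<U^{n_i}y\,;z\>|\to|\<x\,;z\>|}$, hence ${|\alpha_i(y,x)|\to|\<x\,;z\>|/\alpha(y,z)}$; choosing ${x\ne0}$ with ${\<x\,;z\>=0}$ forces ${|\alpha_i(y,x)|\to0}$, hence ${\|\alpha_i(y,x)U^{n_i}y\|=|\alpha_i(y,x)|\,\|y\|\to0}$, hence ${x=0}$ --- a contradiction. (The paper phrases this with a pair $z,z'$ and the remark that $\alpha_i(y,x)$ cannot depend on $z$ while $\alpha(y,z)$ cannot depend on $x$; the essential point is the same.) This orthogonality trick, exploiting ${\|U^{n}y\|=\|y\|}$ only to convert ${|\alpha_i|\to0}$ into strong convergence to zero, is the missing idea; no norm estimate against $\|z\|\,\|y\|$ will do the job.
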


\begin{proof}
We begin with a definition$.$ A normed space $\X$ is said to be of
{\it type 1}\/ if strong convergence (i.e., convergence in the norm
topology) for an arbitrary $\X$-valued sequence $\{x_k\}$ coincides with weak
convergence plus convergence of the norm sequence $\{\|x_k\|\}$ (i.e.,
${x_k\conv x}$ $\iff$ $\big\{{x_k\wconv x}$ and ${\|x_k\|\to\|x\|}\big\}$
--- also called {\it Radon--Riesz space}\/ and the {\it Radon--Riesz
property}\/, respectively)$.$ Every Hilbert space is a Banach space of type 1.

\vskip6pt\noi
{\bf Part 1.}
{\it
If a power bounded operator\/ $T$ on a type\/ $1$ normed space\/ $\X$ is
weakly l-sequentially supercyclic, then either
\vskip2pt
\begin{description}
\item{$\kern-4pt$\rm(i)$\kern2pt$}
$T$ is weakly stable,$\;\;$ or
\vskip2pt
\item{$\kern-6pt$\rm(ii)$\kern2pt$}
the set
\vskip-1.5pt\noi
$$
M_T=
\big\{y\in\X\!:\,
\hbox{\rm $y$ is a weakly l-sequentially supercyclic}
$$
\vskip-2.5pt\noi
$$
\kern102pt
\hbox{\rm vector for\/ $T\kern-1pt$ such that\/ ${T^ny\notwconv0}$}\/\big\}
$$
\vskip1pt\noi
is non\-empty, and if\/ $y$ is any vector in\/ $M_T$, then for every nonzero\/
${f\in\X^*}$ such that\/ ${f(T^ny)\not\to0}$ either
\vskip2pt
\begin{description}
\item{$\kern8pt$\rm(a$'$)$\kern2pt$}
$\liminf_n|f(T^ny)|=0,\;\;$ or
\vskip2pt
\item{$\kern8pt$\rm(b$'$)$\kern2pt$}
$\limsup_k|f(T^{n_k}y)|<\|f\|\kern-1pt\limsup_k\|T^{n_k}y\|$ for some
subsequence
\vskip0pt
$\kern14pt\{T^{n_k}\}$ of\/ $\{T^n\}$.
\end{description}
\end{description}
}
\vskip2pt\noi
(Where $\X^*$ stands for the dual of $\X.$) This is Theorem 6.2 from
\cite{KD1}$.$ If a vector ${y\in\X}$ is such that ${T^ny\wconv0}$, then
(a$'$) is tautolo\-gically satisfied for every ${f\in\X^*}\!$, and hence
alternative (ii) can be rewritten as
\vskip2pt
\begin{description}
\item{$\kern-6pt$\rm(ii)$\kern2pt$}
{\it if\/ ${y\in\X}$ is any weakly l-sequentially supercyclic vector for\/
$T\kern-1pt$, then for an arbitrary\/ ${f\in\X^*}$ either\/ {\rm(a$'$)} or\/
{\rm(b$'$)} holds true}\/.
\end{description}
\vskip2pt\noi
If ${\X=\H}$ is a Hilbert space and if $T$ is a contraction, then
$T={C\oplus U\!}$ is uniquely a direct sum of a completely nonunitary
contraction $C$ and a unitary operator $U\!$ (where any of the these direct
summands may be missing) by Nagy--Foia\c s--Langer decomposition for
Hilbert-space contractions (see, e.g., \cite[p.8]{NF} or \cite[p.76]{MDOT})$.$
Since every completely nonunitary contraction is weakly stable (see, e.g.,
\cite[p.55]{Fil} or \cite[p.106]{MDOT}), the above result when restricted to
contractions is equivalent to the case of plain unitary operators, and this
is stated as follows$.$ {\it If a unitary operator\/ $U\!$ on a Hilbert space
$\H$ is weakly l-sequentially supercyclic, then either
\vskip2pt
\begin{description}
\item{$\kern-4pt$\rm(i)$\kern2pt$}
$U\!$ is weakly stable,$\;\;$ or
\vskip2pt
\item{$\kern-6pt$\rm(ii)$\kern2pt$}
$\kern-2pt$if\/ ${y\in\H}$ is a weakly l-sequentially supercyclic vector
for\/ $U\!$, then for an arbitrary\/ ${z\in\H}$ either
\vskip2pt
\begin{description}
\item{$\kern8pt$\rm(a$''$)$\kern2pt$}
$\liminf_n|\<U^ny\,;z\>|=0,\;\;$ or
\vskip2pt
\item{$\kern10pt$\rm(b)$\kern5pt$}
$\limsup_k|\<U^{n_k}y\,;z\>|<\|z\|\kern1pt\|y\|$ for some subsequence\/
$\{U^{n_k}\}$ of\/ $\{U^n\}$.
\end{description}
\end{description}
}
\vskip2pt\noi
(Because the completely nonunitary part of any contraction is necessarily
weakly stable; and unitary operators are isometries.)

\vskip6pt\noi
{\bf Part 2.}
Suppose a unitary operator $U\!$ is weakly l-sequentially supercyclic and not
weakly stable$.$ Let $y$ be a weakly l-sequentially supercyclic vector for
$U\!.$ Thus either (a$''$) or (b) holds$.$ But property (a$''$) holds if
and only if (for an arbitrary ${z\in\H}$)
\vskip2pt
\begin{description}
\item{}
\vskip2pt
\begin{description}
\item{$\kern21pt$\rm(a)$\kern2pt$}
$\liminf_k|\<U^{n_k}y\,;z\>|=0$ {\it for every\/ $($equivalently, for
some\/$)$
\vskip0pt
\hskip15pt
boundedly spaced subsequence}\/ $\{U^{n_k}\}$ of $\{U^n\}.$
\end{description}
\end{description}
\vskip2pt\noi
Indeed, by Lemma 3.1($P_2,P^{\,\prime}_2$) $\liminf_n|\<U^ny\,;z\>|=0$ for
every ${z\in\H}$ if and only if $\liminf_k|\<U^{n_k}y\,;z\>|=0$ for every
${z\in\H}$ for every (equivalently, for some) boundedly spaced subsequence
$\{U^{n_k}\}$ of $\{U^n\}.$ Hence (a$''$) holds if and only if (a) holds$.$
On the other hand, if (a$''$) fails, then (b) holds$.$ Thus suppose (a$''$)
fails; equivalently, suppose (a) fails$.$ Fix an arbitrary weakly
l-sequentially supercyclic vector ${y\in\H}$ for $U$ for which (a$''$)
fails so that (b) holds$.$ Since $U$ is a weakly l-sequentially super\-cyclic
operator, it has a non\-collapsing vector, and so has $U^*\!.$ Let ${z\in\H}$
be an arbitrary non\-collapsing vector for $U^*\!.$ Now suppose (b) holds for
some {\it boundedly spaced}\/ sub\-sequence$.$ Then
Lemma 3.1($P_3,P^{\,\prime}_3$) (with ${\gamma=\gamma'=1}$) ensures
\goodbreak
\vskip4pt\noi
$$
0<\liminf_{\phantom|_{\scriptstyle n}}|\<U^ny\,;z\>|
\le\limsup_n|\<U^ny\,;z\>|<\|z\|\kern1pt\|y\|
$$
\vskip-1pt\noi
(since (a$''$) fails)$.$ If
\vskip-1pt\noi
$$
\liminf_{\phantom|_{\scriptstyle n}}|\<U^ny\,;z\>|=\limsup_n|\<U^ny\,;z\>|,
$$
then $\{|\<U^ny\,;z\>|\}$ converges to a positive number ${\alpha(y,z)}$,
$$
|\<U^ny\,;z\>|\to\alpha(y,z).
$$
Since $y$ is an l-sequentially weakly supercyclic vector for $U\!$, for
each ${x\in\H}$ there exists a scalar sequence $\{\alpha_i(y,x)\}$ such
that ${\alpha_i(y,x)\,U^{n_i}y\wconv x}$ for some subsequence
$\{U^{n_i}\}_{i\ge0}$ of $\{U^n\}_{n\ge0}$, which implies
$$
|\alpha_i(y,x)|\,|\<U^{n_i}y\,;w\>|\to|\<x\,;w\>|
$$
for every ${w\in\H}.$ Hence, since ${|\<U^ny\,;z\>|\to\alpha(y,z)\ne0}$,
$$
|\alpha_i(y,x)|\to\smallfrac{|\<x\,;z\>|}{\alpha(y,z)}
$$
for every ${x\in\H}$, which is a contradiction because $\alpha_i(y,x)$ does
not depend on $z$ and $\alpha(y,z)$ does not depend on $x.$ (Indeed, since
$z$ was taken to be an arbitrary non\-collapsing vector for $U\!$, the above
limit holds for every such a $z$ and every ${x\in\H}$, and so take ${x\in\H}$
and a pair of non\-collapsing vectors ${z,z'}$ for $U^*$ such that $x$ is
orthogonal to $z$ but not to $z'.)$ Outcome$:$
$$
\liminf_{\phantom|_{\scriptstyle n}}|\<U^ny\,;z\>|<\limsup_n|\<U^ny\,;z\>|.
$$
\vskip-2pt\noi
Therefore, if the subsequence in (b) is boundedly spaced,
then
$$
0<\liminf_{\phantom|_{\scriptstyle n}}|\<U^ny\,;z\>
<\limsup_n|\<U^ny\,;z\>|<\|z\|\kern1pt\|y\|.
$$
\vglue-23pt
\end{proof}
\vglue2pt

\vskip0pt
\begin{remark}
As we saw in the proof of Theorem 4.1, the Nagy--Foia\c s--Langer
de\-composition for contractions on a \hbox{Hilbert} space $\H$ says that
$\H$ admits an orthogonal decomposition $\H={\U^\perp\!\oplus\U}$, where a
contraction $T={C\oplus U\!}$ is uniquely a direct sum of a completely
non\-unitary contraction $C=$ ${T|_{\U^\perp}\!\in\B[\U]}$ and a unitary
operator $U\!={T|_\U\in\B[\U]}$, where $C$ is the completely non\-unitary part
of $T$ and $U\!$ is the unitary part of $T$ (any of the these parcels may be
missing)$.$ Moreover, a completely non\-unitary contraction is weakly stable
(see, e.g., \cite[pp.76,106]{MDOT})$.$ Thus a \hbox{Hilbert}-space contraction
$T$ is not weakly stable if and only if it has a (nontrivial) unitary part
$U\!$ which is not weakly stable (and so ${\|T\|=\|U\|=1}).$ Therefore the
question ``{\it does weak l-sequential supercyclicity for a Hilbert-space
contraction implies weak stability}\/$?$'' is equivalent to the question
``{\it does weak l-sequential supercyclicity for a unitary operator implies
weak stability}\/?''
\end{remark}

\vskip-10pt\noi
\bibliographystyle{amsplain}

\end{document}